\numberwithin{equation}{section}
\theoremstyle{plain}
\newtheorem{exam}{Example}[section]
\newtheorem{theorem}[exam]{Theorem}
\newtheorem{definition}[exam]{Definition}
\begin{document}
	\sloppy
	\captionsetup[figure]{labelfont={bf},name={Fig.},labelsep=period}
	\captionsetup[table]{labelfont={bf},name={Table},labelsep=period}

	\title{Variational formulation for stratified steady water wave in   two-layer flows
		  \footnote{ Y. X. was supported by the Natural Science Foundation of Zhejiang Province (No.LZ24A010006), and NNSFC (No. 11931016).  Z. Z. was supported by the National Natural Science Foundation of
China (Grant Nos. 12271509, 12090010 and 12090014).}	}
	\author
	{
		Yuchao He$^{a}$\,\,\,\,\,Yonghui Xia$^{a}\footnote{Corresponding author.}$\,\,\,\,\,Zhe   Zhou$^{b,c}$		
		\\
		{\small \textit{$^a$ School of Mathematical  Science,  Zhejiang Normal University, 321004, Jinhua, China}}\\
		{\small Email:  YuchaoHe@zjnu.edu.cn; xiadoc@163.com; yhxia@zjnu.cn}
		\\
		{\small \textit{$^b$ Academy of Mathematics and Systems Science,
Chinese Academy of Sciences,
Beijing 100190,
China.}}\\
{\small \textit{$^c$ School of Mathematical Sciences, University of Chinese Academy of Sciences, 100049,  Beijing,   China}}\\
		{\small Email : zzhou@amss.ac.cn}
	}
	\maketitle
	\begin{abstract}
		In this paper, the  variational formulation for steady periodic stratified water waves in two-layer flows is given. The critical
		points of a natural energy functional is proved to be the solutions of the governing equations. And the second variation of the functional is also presented.\\
		
		\noindent{\bf Keywords}:   Two-layer flow, Steady periodic stratified waves, Variational formulation.\\
		
	\noindent{\bf MSC}:	2020  76B15, 35J60, 47J15, 76B03.
		\end{abstract}

	\section{Introduction}
	Since the beginning of this century, the stratified water wave model has attracted widespread attention and numerous scholars to study it. The stratified water wave rendered numerous insight to attack the general problems. For example, in equatorial water waves, the equatorial wave-current interactions and the temperature changes contribute to the emergence of the pronounced density stratification  \cite{cons-i,cons-j,2layer}.
	Moreover, people   frequently observed that water columns  have nearly constant density outside of thin transition layers called pycnoclines. As the water wave passes through a pycnocline, the density experiences something close to a jump discontinuity  \cite{walsh2}. Indeed, a common and effective method  is to  view these waves as being a layering of multiple immiscible fluids each with its own constant  density. And the dynamic behavior of such stratified water wave is characterized by  multiple-layer Euler governing equations with a piece-wise constant density function.  
	 There are already quite a few insightful works of piece-wise constant density water waves, including but not limited to \cite{2layer,c-w,c-i, walsh3, wheeler}.
	
	However, from a physical perspective, the piece-wise constant density water waves are still very special. Due to the complexity of  salinity and temperature gradient, the actual stratification situation is more complex, here we  refer the reader to \cite{c-c,jackson} for numerous significant works in fluid mechanics and oceanography. A more nature  approach is to consider continuous stratification.
	The continuous stratified water wave problem was originally initiated
	by Dubreil-Jacotin \cite{zuizao} in 1934. Since then, many scholars have conducted research on the issue of water wave with continuous
	stratification. In 2009, Walsh \cite{walsh1} extended  partial results of Constantin and Strauss \cite{cons-strauss}   to the problem of variable density.  And the existence of global continuous  solutions was proved. In 2011, Escher et al. \cite{escher}  used bifurcation theory to construct small periodic gravity  water waves with continuous stratification. Moreover, Henry and Matioc \cite{henry1}  studied the   capillary-gravity water
	waves with continuous stratification and gave the global bifurcation result.
	After the work of \cite{henry1}, Henry and Matico presented the existence of steady periodic capillary-gravity
	 water waves with  stratification. In 2021, Haziot \cite{haziot} proved that the solutions of large-amplitude steady stratified periodic water waves with the variable density  exist.  In 2023, Xu et al. \cite{X-L-Z} studied the symmetry of continuous stratified water waves.
	
	 Recently, Chen and Walsh\cite{walsh2} presented a new layered model wherein $\Omega=\mathop{\cup}\limits_{i=1}^N\Omega_i$ is partitioned into finitely many immiscible fluid
	regions. In each fluid region $\Omega_i$, the density  distribution is analytic. However, the density varies sharply at the junction of different regions. And Chen and Walsh \cite{walsh2} innovatively proposed the method to recover the wave form some date in the ocean bed.
	In this paper, we  conduct research on Chen and Walsh's \cite{walsh2}   model and present the variational formulation for  two-layer stratified
	water flows and the linear stability results.
Considering the convenience of readers for reading, we only consider the case $N=2$.
	Detailed formulations and dynamic boundary conditions will be provided in the next section.
	For rotational steady water waves, variational formulation was presented by Constantin et al. \cite{cons1}. And based on the variational formulations presented in \cite{cons1}, the linear stability properties and the formal stability properties of rotational steady water waves were studied by Constantin and Strauss \cite{cons2}.
	 It is worth noting that, Chu et al. \cite{2layer} made laying
	a foundation contribution to the works of the variational formulation  in two-layer water waves, in which   the density  is piece-wise constant function. Inspired by the method of Constantin et al. \cite{cons1}, Constantin and Strauss\cite{cons2}, Chu  et al. \cite{2layer},  we  construct a new variational formulation for two-layer water waves with continuous stratification. Different from the work of  \cite{2layer}, the density function in this paper is only require to be piece-wise analytical, which is more  complicated but more  natural from a physical perspective.

	The plan of this paper is as follows: in Section 2,
	we state a two-layer stratified water wave model and present the Euler equations and equivalent formulations. In Section 3, we present variational formulation and show that critical
	points of a natural energy functional are solutions to the governing equations derived from first principles.
Finally, in Section 4, we present the linear stability results by calculating
the second variation of the functional.

	\section{Formulation of stratified water wave }
	\subsection{Euler equations }
	We formulate the problem in a Cartesian coordinate system $(x, y)$ in $\mathbb R^2$, where $x$ is the horizontal and
	$y$ the vertical direction, respectively. The fluid region is defined as $\Omega=\Omega_1\cup\Omega_2$ and
	\[
	\begin{split}
	\Omega_1&=\{(x,y)\in\mathbb R^2:-\pi<x<\pi,\ -d<y<\widetilde\eta(x)\},\\
	\Omega_2&=\{(x,y)\in\mathbb R^2:-\pi<x<\pi,\ \widetilde\eta(x)<y<\eta(x)\},
	\end{split}
	\]
where $d > 0$ represents the depth of the rigid bottom,  $\eta(x, t)$ is the free surface of the upper layer fluid, $\widetilde{\eta}(x)$ represents the free surface of the internal wave.
$(u_i(x,y),v_i(x,y))$ represent the velocity field in $\Omega_i$, $i=1,2$ (see Fig. 1).
\begin{figure}[h]
	\centering
	\includegraphics[scale=0.8]{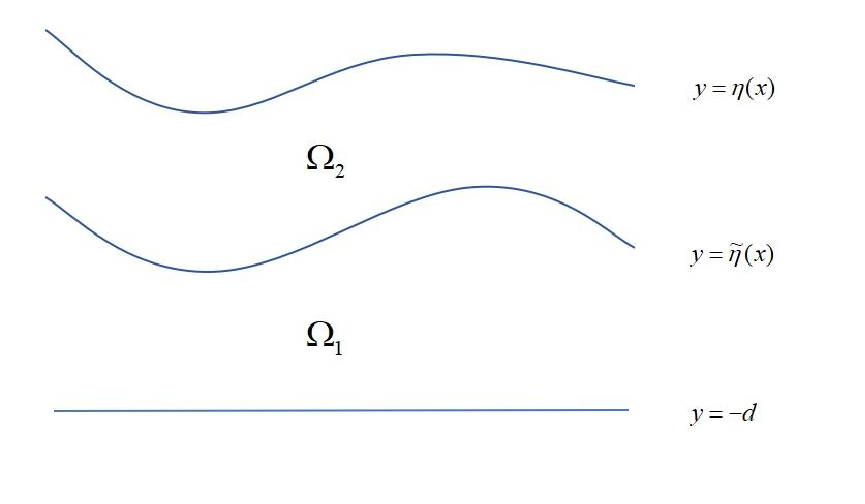}
	\caption{Two-layer stratified water wave  consists of  $\Omega_1$ and $\Omega_2$.  The internal boundary $\widetilde{\eta}(x)$ represents the pycnoclines. As the water wave passes through $\widetilde{\eta}(x)$ from $\Omega_1$ to $\Omega_2$, the density  jumps discontinuity from $\rho_1$ to $\rho_2$.}
	\label{figure}
\end{figure}

 Let $\rho_i$ be the density  of the flow in $\Omega_i$, for $i=1,2$. It should be noted that $\rho_i$ are not  constants. In this paper, we assume that the density increases with settlement depth, which implies $\rho_1>\rho_2$. Inspired by Walsh \cite{walsh1}, we present
the  Euler equations of two-layer water wave together with conservation of mass conditions and  in continuity condition  after traveling wave transformation:
\begin{equation}\label{euler}
\begin{cases}
\rho_i(u_i-c)u_{ix}	+\rho_iv_iu_{iy}=-P_{ix},\\
\rho_i(u_i-c)v_{ix}+\rho_iv_iv_{iy}=-P_{iy}-g\rho_i,\\
(u_i-c)\rho_{ix}+v_i\rho_{iy}=0,\\
u_{ix}+v_{iy}=0,
\end{cases}
\end{equation}
 where $g$ represents the gravitational acceleration, $c$ represents the travelling wave speed and $P_i(x, y)$ represent the pressure, for $i=1,2$.
 And the   boundary conditions are give by
 \begin{equation}
 \begin{cases}
 	v_2=(u_2-c)\eta(x),\quad &\rm{on}\quad y=\eta(x),\\
 	v_1=(u_1-c)\widetilde{\eta}(x),\quad &\rm{on}\quad y=\widetilde{\eta}(x),\\
 	v_2=(u_2-c)\widetilde{\eta}(x),\quad &\rm{on} \quad y=\widetilde{\eta}(x),\\
 	v_1=0,\quad &\rm{on}\quad y=-d.
 \end{cases}
 \end{equation}
 We assume that the pressure is continuous, which implies
 \[
 \begin{split}
 P_2=P_{atm},\quad&\rm{on}\quad y=\eta(x),\\
 P_1=P_2,\quad&\rm{on}\quad y=\widetilde{\eta}(x),
 \end{split}
 \]
 where $P_{atm}$ represents the  atmospheric pressure.\\
 We also make following assumptions to ensure that no stagnation point exists neither in $\Omega_1$ and $\Omega_2$:
 \[
 u_i-c<0,\quad \rm {in}\quad \Omega_i.
 \]
 \subsection{Equivalent formulations}
 Inspired by \cite{walsh1}, we introduce the stream function $\Psi_i(x,y)$ by
 \[
 \Psi_{iy}(x,y)=-\sqrt{\rho_i} v,\quad \Psi_{ix}(x,y)=\sqrt{\rho_i}(u-c).
 \]
And the existence of $\Psi_i $ is ensure by the  $4$-th equation of \eqref{euler}.\\
It is not difficult to find that
$\Psi_i$ are constants  at the boundary. Moreover, we standardize the boundary values of $\Psi_i$ by
\[
\begin{cases}
\Psi_1=\Psi_2=0,\quad&\rm{on}\quad y=\widetilde\eta(x),\\
\Psi_1=-p_1,\quad &\rm{on} \quad y=-d,\\
\Psi_2=-p_2,\quad &\rm{ on}\quad y=\eta(x),
\end{cases}
\]
 where $p_1=\int_{-d}^{\widetilde{\eta}(x)}\sqrt{\rho_1}(u_1(x,y)-c)dy$ and $p_2=-\int_{\widetilde{\eta}(x)}^{\eta(x)}\sqrt{\rho_2}(u_2(x,y)-c)dy$ are constants. The level sets of $\Psi_i$ will be referred to as the streamlines of the
 flow.\\
 Moreover, from  Bernoulli’s law, we define the energy function as
 \[
 E_i=P_i+\rho_i\frac{(u_i-c)^2+v_i^2}{2}+\rho_ig(y+d).
 \]
 Note that $E_i$ remain constant along streamlines in $\Omega_i$, since $(u-c)E_{ix}+vE_{iy}=0.$\\
 Similarly, we  obtain that $\rho_i$ are also constants along streamlines in $\Omega_i.$ Therefore, we rewrite them as $\rho_i=\rho_i(-\Psi_i(x,y)).$ And we assume $\rho'_i(-\Psi_i)\leq0$, which is physically reasonable.
 Therefore, we transform the first two equations of \eqref{euler} into
 \begin{equation}\label{jiaocha}
 	\begin{cases}
 		\Psi_{iy}\Psi_{ixy}-\Psi_{ix}\Psi_{iyy}=-P_{ix},\\
 		\Psi_{ix}\Psi_{ixy}-\Psi_{iy}\Psi_{ixx}=-P_{iy}-g\rho_i(-\Psi_i).
 	\end{cases}
 \end{equation}
 Moreover, according to \eqref{jiaocha}, we obtain
 \[
 \nabla ^\perp \Psi_i \cdot\nabla(\Delta\Psi_i-gy\rho_i'(-\Psi_i))=0,
 \]
 which means that $\Delta\Psi_i-gy\rho_i'(-\Psi_i)$ and $\Psi_i$ are orthogonal. In other words, there exist $\beta_1:[-p_1,0]\to\mathbb R$ and $\beta_2:[-p_2,0]\to\mathbb R$, s.t.
 \[
 \Delta\Psi_i-gy\rho_i'(-\Psi_i)=-\beta_i(\Psi_i).
 \]
  And we assume $\beta_i$  decrease as depth increases, that is \[
 \beta_i'<0.
 \]
 Finally, we transform \eqref{euler} and the dynamic boundary conditions above into
 \begin{equation}\label{psi}
 	\begin{cases}
 	&\Delta\Psi_i=gy\rho_i'(-\Psi_i)-\beta_i(\Psi_i)\quad \rm{in}\quad \Omega_i,\quad\rm{for}\quad i=1,2,\\
 	&\frac{|\nabla\Psi_2|}{2}+g\rho_2(-\Psi_2)(y+d)=Q_2\quad \rm{on}\quad y=\eta(x),\\
 	&\frac{|\Psi_1|^2-|\Psi_2|^2}{2}+g(\rho_1(-\Psi_1)-\rho_2(-\Psi_2))(y+d)=Q_1\quad \rm{on}\quad y=\widetilde\eta(x)\\
 	&\Psi_1=\Psi_2=0 \quad \rm{on}\quad y=\widetilde{\eta}(x),\\
 	&\Psi_1=-p_1\quad\rm{on}\quad y=-d,\\
 	&\Psi_2=-p_2\quad\rm{on}\quad y=\eta(x),
 	\end{cases}
 	\end{equation}
 	where $Q_1$ and $Q_2$ are physical parameters.

 	\section{First variational formulation}
 	
 	Following the ideas developed in Chu et al.\cite{2layer}, Chu and Escher \cite{c-e},   Constantin et al. \cite{cons1}, we aim to determine a variational formulation
 	for problem \eqref{psi}.
 	We now show that solutions of \eqref{psi} are characterised as critical points of a certain
 	functional.
 	
 	Let $\Psi_{1p}$, $\Psi_{2p}$, $\widetilde\eta_p$, $\eta_p$ be the  disturbance functions of $(\Psi_1,\Psi_2,\widetilde{\eta},\eta)$.
 	Set $\mathbb G=\{G:\mathbb R\to\mathbb R\  is\ a\ C^2\  function,\ G'' \ vanishes\ nowhere\}$. Then we obtain that $G$ is a concave or convex function. And for any $F_i\in G$, we define
 	\begin{equation}\label{F_i}
 	(\partial_2 F_i(y,\cdot))^{-1}(p)=gy\rho_i'(p)-\beta_i(-p),\quad p\in \mathbb R.
 	\end{equation}
 	For the sake of notational clarity later on, we will denote by $\partial_1 F_i$ and $\partial_2 F_i$ the derivatives of $F_i$ with the respect to the first and second variable, respectively; similarly for
 	higher order derivatives. In fact, from this definition it follows that $F_i$ is a $ C^2$-function.
 	Furthermore, it should be pointed out that $F_i$ is only defined up to a function of $y$, a
 	fact that we are going to exploit in the sequel.
 	In the following Theorem, we restrict the perturbations
 	$(\Psi_{1p},\Psi_{2p},\eta_p,\widetilde\eta_p)$ of $(\Psi_1,\Psi_2,\eta,\widetilde\eta)$  to the subspace
 	\[
 	\mathbb{P}=\left\{(\Psi_{1p},\Psi_{2p},\widetilde{\eta_p},\eta_p)\in\mathbb  G: \int_B\Psi_{1py}dx=0 \ \rm{and}\ \int_S \frac{\partial  \Psi_{2p}}{\partial n_2}dl=0\right\},
 	\]
 	and prove that $(\Psi_1,\Psi_2,\eta,\widetilde\eta)$ is a critical point of $H$ if and only if $(\Psi_1,\Psi_2,\eta,\widetilde{\eta})$ solves \eqref{psi}, where  $B$ represents the bottom of $\Omega_1$ and $ S$ correspond to the upper surface boundary of $\Omega_2$.
 	\begin{theorem}
 	 Let  $F_i$ be defined as
 		in \eqref{F_i}. Then there exist constants $p_1, p_2 \in \mathbb R$ such that,  $(\Psi_1,\Psi_2, \eta,\widetilde\eta)$ is a critical point of
 		the functional
 		\[
 		\begin{split}
 		H(\Psi_1,\Psi_2,\eta,\widetilde\eta)=\iint_{\Omega_1}[\frac{|\nabla\Psi_1|^2}{2}+g\rho_1(p_1)(y+d)-(Q_1+Q_2)-F_1(y,\Delta\Psi_1)]dydx\\
 		+\iint_{\Omega_2}[\frac{|\nabla\Psi_2|^2}{2}+g\rho_2(0)(y+d)-Q_2-F_2(y,\Delta\Psi_2)]dydx,
 		\end{split}
 		\]
 			 if and only if
 			$(\Psi_1,\Psi_2,\eta,\widetilde{\eta})$ solves \eqref{psi}.		 	
 	\end{theorem}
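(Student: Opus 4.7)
The plan is to compute the first variation $\delta H$ along a smooth one-parameter family $(\Psi_i+s\Psi_{ip},\widetilde\eta+s\widetilde\eta_p,\eta+s\eta_p)$ at $s=0$, integrate by parts via Green's identities to split the result into bulk and boundary contributions, and then compare with the system \eqref{psi}. Because the perturbations $\widetilde\eta_p$ and $\eta_p$ deform the integration domains, I would also apply the Reynolds transport (Leibniz) rule to generate line integrals on the moving surfaces $y=\widetilde\eta(x)$ and $y=\eta(x)$; the rigid bottom $y=-d$ stays fixed.

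First, I would vary $\Psi_i$ alone. The quadratic term $\int |\nabla\Psi_i|^2/2$ contributes $\int \nabla\Psi_i\cdot\nabla\Psi_{ip}$, and the term $\int F_i(y,\Delta\Psi_i)$ contributes $\int \partial_2 F_i(y,\Delta\Psi_i)\,\Delta\Psi_{ip}$. Integrating by parts once in the first and twice in the second and combining, the bulk contribution is $-\Delta\bigl(\Psi_i+\partial_2 F_i(y,\Delta\Psi_i)\bigr)\Psi_{ip}$ plus conormal boundary pieces. Because $G''\neq 0$ in the definition of $F_i$ makes $\partial_2 F_i(y,\cdot)$ strictly monotone, the pointwise relation $\Psi_i+\partial_2 F_i(y,\Delta\Psi_i)=0$ is equivalent by \eqref{F_i} to the quasilinear equation $\Delta\Psi_i=gy\rho_i'(-\Psi_i)-\beta_i(\Psi_i)$ in $\Omega_i$. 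Testing against compactly supported perturbations first yields harmonicity of $\Psi_i+\partial_2 F_i(y,\Delta\Psi_i)$; the remaining boundary analysis, together with the freedom to alter $F_i$ by a pure function of $y$ emphasised right after \eqref{F_i}, pins this harmonic function down to zero.

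Next, the surface contributions from perturbing $\widetilde\eta$ and $\eta$ evaluate the bulk integrand $\tfrac12|\nabla\Psi_i|^2+g\rho_i(\cdot)(y+d)-F_i(y,\Delta\Psi_i)$ on the corresponding free surface multiplied by $\eta_p$ or $\widetilde\eta_p$. Adding these to the conormal pieces from the $\Psi_i$-variation, and using the kinematic identities $\Psi_1=\Psi_2=0$ on $y=\widetilde\eta$ and $\Psi_2=-p_2$ on $y=\eta$, I expect the coefficients of $\eta_p$ and $\widetilde\eta_p$ to reproduce exactly the two Bernoulli-type surface identities in \eqref{psi}. Meanwhile, the integral constraints built into $\mathbb{P}$ are codimension-one conditions on $\Psi_{1p}$ and $\Psi_{2p}$: they introduce Lagrange multipliers that play the role of the constants $p_1$ and $p_2$, and these are precisely the constants appearing in the Dirichlet conditions $\Psi_1=-p_1$ on $y=-d$ and $\Psi_2=-p_2$ on $y=\eta$.

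The main technical obstacle is the bookkeeping on the internal interface $y=\widetilde\eta(x)$, where the conormal contributions from $\Omega_1$ and $\Omega_2$ appear with opposite outward normals and must be synthesised with the Reynolds contribution proportional to $\widetilde\eta_p$ coming from both sides. One has to track how $\Psi_{1p}$ and $\Psi_{2p}$ restrict to the perturbed interface so that the identity $\Psi_1=\Psi_2=0$ on $y=\widetilde\eta$ persists to first order in $s$; this compatibility is what ultimately produces the density-jump Bernoulli condition on the third line of \eqref{psi}. Once this interface algebra is in place, the converse direction is immediate: substituting a solution of \eqref{psi} back into $\delta H$ makes each bulk, surface and boundary piece vanish term by term, completing the equivalence.
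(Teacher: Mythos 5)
Your overall strategy (Reynolds transport for the moving boundaries, Green's identities to split bulk from boundary contributions, special families of perturbations to isolate each equation of \eqref{psi}) matches the paper's, but two steps as written would fail. First, you invoke ``the freedom to alter $F_i$ by a pure function of $y$'' to pin the harmonic function $\Psi_i+\partial_2F_i(y,\Delta\Psi_i)$ down to zero. That freedom is powerless here: adding a function of $y$ to $F_i$ leaves $\partial_2F_i$ unchanged, so it cannot remove the undetermined constant (or harmonic function) in the Euler--Lagrange equation. Interior variations only force $\partial_2F_i(y,\Delta\Psi_i)=-\Psi_i+k_i$ for some constants $k_i$, and these constants must be absorbed by replacing $\Psi_i$ with $\Psi_i-k_i$ throughout --- this is precisely why the statement reads ``there exist constants $p_1,p_2$.'' Relatedly, your double integration by parts against compactly supported $\Psi_{ip}$ yields only harmonicity of $\Psi_i+\partial_2F_i(y,\Delta\Psi_i)$, and upgrading harmonicity to constancy requires a boundary analysis you do not carry out; the cleaner route is to keep the variation in the form $\iint[\Psi_i+\partial_2F_i(y,\Delta\Psi_i)]\,\omega_{ip}\,dydx$ and note that $\omega_{ip}=\Delta\Psi_{ip}$ ranges over all functions with $\iint_{\Omega_i}\omega_{ip}\,dydx=0$ once the Neumann data of $\Psi_{ip}$ are set to zero, which immediately forces the bracket to be constant.

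Second, the coefficients of $\eta_p$ and $\widetilde\eta_p$ produced by the domain variation are not the Bernoulli identities of \eqref{psi}: they carry the extra terms $-F_2(y,\Delta\Psi_2)$ on $y=\eta(x)$ and $F_2(y,\Delta\Psi_2)-F_1(y,\Delta\Psi_1)$ on $y=\widetilde\eta(x)$, which do not appear in \eqref{psi}. This is exactly where the ``$F_i$ is only defined up to a function of $y$'' freedom must be spent: since $\Delta\Psi_2$ is a prescribed function of $y$ along the level set $\Psi_2=-p_2$, one may normalise $F_2$ so that $F_2(y,gy\rho_2'(-k_S)-\beta_2(k_S))=0$, and then impose $F_1(y,\Delta\Psi_1)=F_2(y,\Delta\Psi_2)$ on the interface; only after this do the surface coefficients reduce to the second and third equations of \eqref{psi}. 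Without that normalisation the claimed equivalence does not close, so ``I expect the coefficients to reproduce exactly the two Bernoulli-type surface identities'' is the point that needs an argument rather than an expectation.
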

 \begin{proof}
 	Note that
 	\[
 	\delta H=\lim_{\epsilon\to0}\frac{H(\Psi_1+\epsilon\Psi_{1p},\Psi_2+\epsilon\Psi_{2p},\widetilde\eta+\epsilon\widetilde{\eta}_p,\eta+\epsilon\eta_p)-H(\Psi_1,\Psi_2,\widetilde\eta,\eta)}{\epsilon},
 	\]
 	 we  decompose $\delta H$ into four parts: \begin{equation}\label{H}
 		\delta H=\delta H_1+\delta H_2+\delta H_3+\delta H_4.
 	\end{equation}
 	By direct computation, we obtain that
 	\[
 	\begin{split}
 	\delta H_1=-\iint_{\Omega_1}[\Psi_1+\partial_2 F_1(y,\Delta \Psi_1)]\Delta\Psi_{1p}dydx\\
 	-\iint_{\Omega_2}[\Psi_2+\partial_2 F_2(y,\Delta \Psi_2)]\Delta\Psi_{2p}dydx,
 	\end{split}
 	\]
 	\[
 	\delta H_2=\int_S[\frac{|\nabla\Psi_2|^2}{2}+g\rho_2(p_1)(y+d)-Q_2-F_2(y,\Delta \Psi_2)]\eta_pdx,
 	\]
 	\[
 	\delta H_3=\int_S\Psi_2\cdot\frac{\partial\Psi_2}{\partial n_2}dl-\int_{\widetilde{S}}\Psi_2\frac{\partial\Psi_{2p}}{\partial n_2}dl+\int_{\widetilde{S}}\Psi_1\frac{\partial\Psi_{1p}}{\partial n_1}dl-\int_B \Psi_1\Psi_{1py}dx,
 	\]
 	\[
 	\delta H_4=\int_{\widetilde{S}}[\frac{|\nabla\Psi_1|^2-|\nabla\Psi_2|^2}{2}+g(\rho_1(0)-\rho_2(0))(y+d)-F_1(y,\Delta\Psi_1)+F_2(y,\Delta\Psi_2)-Q_1]\widetilde{\eta}_pdx,
 	\]
 	where $\widetilde S$ corresponds to the upper surface boundary of $\Omega_1$ and $n_1$, $n_2$ correspond to the outer normal vectors of $\widetilde S$ and $ S$ respectively.\\
 	According to $(\partial_2 F_i(y,\cdot))^{-1}=gy\rho_i'(p)-\beta_i(-p)$ and the first equation of \eqref{psi}, we obtain
 \[	-\Psi_i=\partial_2 F_i(y,\Delta \Psi_i)\quad\rm{for}\quad i=1,2.\]
 	Moreover, $\delta H_1=0$.\\
 	Combining  the last three equations of \eqref{psi}  with
 	$\int_B\Psi_{1py}dx=0, \  \int_S\frac{\partial\Psi_{2p}}{\partial n_2}dl=0$,
 we obtain
 \[
 \int_{\widetilde{S}}\Psi_1\frac{\partial\Psi_{1p}}{\partial n_1}dl=0
 \]
 and
 \[
 \int_{\widetilde{S}}\Psi_2\frac{\partial\Psi_{2p}}{\partial n_1}dl=0.
 \]
 Therefore, we have
 \[
-\int_B \Psi_1\Psi_{1py}dx+\int_S\Psi_2\frac{\partial\Psi_{2p}}{\partial n_2}dl=p_1\int_B\Psi_{1py}dx-p_2\int_S\frac{\partial\Psi_{2p}}{\partial n_2}dl=0,
 \]
 which implies $\delta H_3=0.$\\
 According to the second equation of \eqref{psi},
 we have
 \[
 \delta H _2=\int_{\widetilde{S}}-F_2(y,\Delta\Psi_2)\eta_pdx=0,
 \]
 by letting $F_2(gy\rho_2'(-\Psi_2)-\beta_2(\Psi_2))=0$ (this is not contradictory to the definition of $F_2$).

 Similarly, we let $F_1(y,\Delta\Psi_1)=F_2(y,\Delta\Psi_2)$.
  Then, we obtain
  \[
  \frac{|\nabla\Psi_1|^2-|\nabla\Psi_2|^2}{2}+g(\rho_1(0)-\rho_2(0))(y+d)=Q_1\quad \rm{on} \quad y=\widetilde{\eta}(x).
  \]
  Therefore, $\delta H_4=0.$

  Now we prove that if $\delta H=0$, then $(\Psi_1,\Psi_2,\eta,\widetilde\eta)$ solves \eqref{psi}. \\
  (i) Take $\eta_p=\widetilde{\eta}_p=0$
 and let $\Psi_{1p}$, $\Psi_{2p}$ satisfy that
 \[
 \begin{cases}
 	\Psi_{1py}=0\quad\rm{on}\quad B,\\
 	\frac{\partial\Psi_{1p}}{\partial n_1}=0 \quad \rm{on}\quad S.
 \end{cases}
 \]
 and
 \[
 \begin{cases}
 	\frac{\partial\Psi_{2p}}{\partial n_2}=0\quad \rm{on } \quad S,\\
 	\frac{\partial\Psi_{2p}}{\partial n_1}=0\quad \rm{on}\quad \widetilde{S}.
 \end{cases}
 \]
 Then \eqref{H}  is simplified as
 \[
 \iint_{\Omega_1}(\Psi_1+\partial_2F_1(y,\Delta\Psi_1))\omega_{1p}dydx+\iint_{\Omega_2}(\Psi_2+\partial_2F_2(y,\Delta\Psi_2))\omega_{2p}dydx=0,
 \]
 where $\omega_{1p}=\Delta\Psi_{1p}$, $\omega_{2p}=\Delta\Psi_{2p}$ are arbitrary functions satisfying
 \[
 \iint_{\Omega_{i}}\omega_{ip}dydx=0, \quad \rm{for}\quad i=1,2.
 \]
 Therefore, there exist $k_1$, $k_2 \in \mathbb R$ satisfying
 \begin{equation}\label{3.4}
 \partial_2 F_i(y,\Delta\Psi_i)=-\Psi_i+k_i, \quad \rm{for}\quad i=1,2.
 \end{equation}
According to the definition of $F_i$, we obtain that
\[
\Delta(\Psi_i-k_i)=gy\rho_i'(-\Psi_i+k_i)-\beta_i(\Psi_i-k_i),
\]
which meets the first equation of \eqref{psi} (with $\Psi_i$ replaced by $\Psi_i-k_i$).
 Moreover, we rewrite the expression of $\delta H$ as
 \begin{equation}\label{new H}
 	\begin{split}
 		\delta H =&-\iint_{\Omega_1}k_1\Delta\Psi_{1p}dydx-\iint_{\Omega_2}k_2\Delta\Psi_{2p}dydx\\
 		&+\int_S[\frac{|\Psi_2|^2}{2}+g\rho_2(0)(y+d)-Q_2-F_2(y,\Delta_2\Psi_2)]\eta_pdx\\
 		&+\int_S\Psi_2\frac{\partial\Psi_2p}{\partial n_2}dl-\int_{\widetilde{S}}\Psi_2\frac{\partial\Psi_{2p}}{\partial n_1}dl+\int_{\widetilde{S}}\Psi_1\frac{\partial\Psi_{1p}}{\partial n_1}dl-\int_B\Psi_1\Psi_{1py}dx\\
 		&+\int_{\widetilde{S}}[\frac{|\Psi_1|^2-|\Psi_2|^2}{2}+g(\rho_1(0)-\rho_2(0))(y+d)-F_1(y,\Delta\Psi_1)+F_2(y,\Delta\Psi_2)-Q_1]\widetilde{\eta}_pdx=0.
 	\end{split}
 \end{equation}

 (ii)
 Take $\eta_p=\widetilde{\eta}_p=0$ and let
 \[
 \begin{cases}
 \Delta\Psi_{1p}=0 \quad in\quad \Omega_1,\\
 \Psi_{1py}=0\quad \rm{on}\quad B,\\
 \frac{\partial\Psi_{1p}}{\partial n_1}=f_1\quad \rm{on}\quad \widetilde{S},
 \end{cases}
 \]
 and
 \[
 \begin{cases}
 	\Delta\Psi_{2p}=0\quad\rm{in} \quad \Omega_2,\\
 	\frac{\partial\Psi_{2p}}{\partial n_1}=f_2\quad\rm{on}\quad\widetilde{S},\\
 	\frac{\partial\Psi_{2p}}{\partial n_2}=0\quad \rm{on}\quad S,
 \end{cases}
 \]
 hold for $\Psi_{1p}$, $\Psi_{2p}$,
 where $f_1$ is an arbitrary smooth functions on $B$ and $f_2$ is an arbitrary smooth function on $S$ with
 \[
 \int_Bf_1dx=0,\quad \int_Sf_2dl=0.
 \]
 Then \eqref{new H} is transformed into
 \[
 \int_{\widetilde{S}}\Psi_2f_2dl-\int_{\widetilde{S}}\Psi_1f_1dx=0,
 \]
 which implies $\Psi_1=k_1$, $\Psi_2=k_2$ on $\widetilde{S}$
 with $\Psi_i$ replaced by $\Psi_i-k_i$.

 (iii) Take $\eta_p=\widetilde{\eta}_p=0$, $\Psi_{1p}, \Psi_{2p}$ be the solution of following equations:
 \[
 \begin{cases}
 \Delta\Psi_{1p}=0\quad \rm{in}\quad \Omega_1,\\
 \frac{\partial\Psi_{1p}}{\partial n_1}=0\quad \rm{on}\quad \widetilde{S},\\
 \Psi_{1yp}=f_1\quad \rm{on}\quad B,
 \end{cases}
 \]
 and
 \[
 \begin{cases}
 	\Delta\Psi_{2p}=0\quad\rm{in}\quad \Omega_2,\\
 	\frac{\partial\Psi_{2p}}{\partial n_1}=0\quad \rm{on}\quad\widetilde{S},\\
 	\frac{\partial\Psi_{2p}}{\partial n_2}=f_2\quad \rm{on}\quad S,
 \end{cases}
 \]
 where  $f_1$ is an arbitrary smooth function on $B$ and $f_2$ is an arbitrary smooth function on $S$ with
 \[
 \int_Bf_1dx=0,\quad\int_Sf_2dl=0.
 \]
 Therefore, \eqref{new H} is transformed into $\int_S\Psi_2f_2dl-\int_B\Psi_1f_1dx=0$, which implies
 \begin{equation}\label{3.6}
 \begin{cases}
 	\Psi_1-k_1=-k_B\quad\rm{on}\quad B,\\
 	\Psi_2-k_2=-k_S\quad\rm{on}\quad S.
 	\end{cases}
  \end{equation}
  Taking $k_B=-p_1$ and $k_S=-p_2$, we obtain the last two equations of \eqref{psi} with $\Psi_i$ replaced by $\Psi_i-k_i$.

(iv) Let $\Psi_{1p}=\Psi_{2p}=0$  and $\eta_p$, $\widetilde{\eta}_p$ be the arbitrary smooth functions.
Therefore \eqref{new H} is transformed into
\[
\begin{split}
&\int_S(\frac{|\nabla\Psi_2|^2}{2}+g\rho_2(p_1)(y+d)-Q_2-F_2(y,\Delta\Psi_2))\eta_pdx\\
&+\int_{\widetilde{S}}(\frac{|\nabla\Psi_1|^2-|\nabla\Psi_2|^2}{2}+g(\rho_1(0)-\rho_2(0))(y+d)+F_2(y,\Delta\Psi_2)-F_1(y,\Delta\Psi_1)-Q_1)\widetilde{\eta}_pdx=0.
\end{split}
\]
Moreover, we have
\[
\begin{cases}
	\frac{|\nabla\Psi_2|^2}{2}+g\rho_2(p_1)(y+d)-Q_2-F_2(y,\Delta\Psi_2)=0,\\
	\frac{|\nabla\Psi_1|^2-|\nabla\Psi_2|^2}{2}+g(\rho_1(0)-\rho_2(0))(y+d)+F_2(y,\Delta\Psi_2)-F_1(y,\Delta\Psi_1)-Q_1=0.
\end{cases}
\]
According to \eqref{3.4} and \eqref{3.6}, we obtain \[\partial_2F_2(y,\Delta\Psi_2)=-k_S\quad \rm {on}\quad S,\]
or
\[
\partial_2F_2(\eta(x),g\eta(x)\rho'(-k_S)-\beta(k_S))=-k_S.
\]
From the definition of $F_2$, it is only related to $y$.
We may therefore require
\[
F_2(y,gy\rho_2'(-k_S)-\beta(k_S))=0,\quad y\geq-d.
\]
Then $F_2(\eta(x),\Delta\Psi_2(x,\eta(x))=F_2(\eta(x),g\rho'(-k_S)-\beta(k_S))=0$, for $-\pi<x<\pi$, which leads to the second equation of \eqref{psi}.
Similarly, we let $F_1(y,\Delta\Psi_1)=F_2(y,\Delta\Psi_2)$ on $y=\widetilde{\eta}(x)$ to obtain the third equation of \eqref{psi}.
 \end{proof}

\section{Second variation}
 Beginning with a critical point $(\Psi_1,\Psi_2,\eta,\widetilde{\eta})\in \mathbb G$. A pair of variations of the critical point is denoted by $(\Psi_{1p},\Psi_{2p},\eta_{p},\widetilde\eta_p)\in \mathbb  G$ and $(\overline\Psi_{1p},\overline\Psi_{2p},{\overline\eta}_{p},{\overline{\widetilde\eta}}_p)\in\mathbb G$.
We also let
$\omega_{i}=\Delta\Psi_i$, $\omega_{ip}=\Delta\Psi_{ip}$ and $\overline\omega_{ip}=\Delta\overline\Psi_{ip}$, for $i=1,2$.  	Then  the second variation of $H$ is calculated in the follow theorem.
	
\begin{theorem}
	If $(\Psi_1,\Psi_2,\eta,\widetilde\eta)$ is the solution of \eqref{psi}, then $\delta^2H$ is expressed as
	\begin{equation}\label{delta2H}
	\begin{split}
	\delta^2H=	&\iint_{\Omega_1}[\nabla\overline{\Psi}_{1p}\cdot\nabla\Psi_{1p}-\partial_{22}F_1(y,\omega_1)\omega_{1p}\overline{\omega}_{1p}]dydx+\iint_{\Omega_2}[\nabla\overline{\Psi}_{2p}\cdot\nabla\Psi_{2p}-\partial_{22}F_2(y,\omega_2)\omega_{2p}\overline{\omega}_{2p}]dydx\\
		&+\int_{\overline{S}}\overline\Psi_{2p}\frac{\partial\Psi_{2p}}{\partial n_1}dl+\int_S\Psi_{2y}\frac{\overline\Psi_{2p}}{\partial n_2}\eta_pdl+\int_S\Psi_{2y}\overline{\eta}_p\frac{\partial \Psi_{2p}}{\partial n_2}dl\\
		&-\int_{\widetilde{S}}(\overline\Psi_{2p}+\Psi_{2y}\overline{\widetilde{\eta}}_p)\frac{\partial\Psi_{2p}}{\partial n_2}dl+\int_{\widetilde{S}}\Psi_{1y}\overline{\widetilde{\eta}}_p\frac{\partial \Psi_{1p}}{\partial_{n_1}}dl+\int_{\widetilde{S}}\Psi_{1y}\frac{\partial\overline\Psi_{1p}}{\partial{n_1}}\widetilde{\eta}_pdl-\int_{\widetilde{S}}\Psi_{2y}\frac{\partial\Psi_{2p}}{\partial n_1}\widetilde{\eta}_pdl
		\\&+\int_S\{g\rho_2(p_1)+[\frac{1}{2}|\nabla\Psi|^2]_y\}\eta_p\overline{\eta}_pdx-\int_S\partial_2F_2(y,\omega_2)\overline\omega_{2p}\eta_pdx\\
		&-\int_S[\partial_1F_2(y,\omega_2)+\partial_2F_2(y,\omega_2)\omega_{2y}]\eta_p\overline\eta_pdx
		+\int_{\widetilde{S}}[-\partial_2 F_{1}(y,\omega_1)\overline{\omega}_{1p}+\partial_2 F_{2}(y,\omega_2)\overline\omega_{2p}]\widetilde{\eta}_pdx\\
		&+\int_{\widetilde{S}}[g(\rho_1(0)-\rho_2(0))-\partial_1F_1(y,\omega_1)-\partial_2F(y,\omega_1)\omega_{1y}+\partial_1F_2(y,\omega_2)+\partial_2F_2(y,\omega_2)\omega_{2y}]\widetilde{\eta}\overline{\widetilde{\eta}}_pdx.
	\end{split}
	\end{equation}
\end{theorem}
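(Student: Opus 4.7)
The plan is to view $\delta^2H$ as the G\^{a}teaux derivative of $\delta H$ in a second direction $(\overline{\Psi}_{1p},\overline{\Psi}_{2p},\overline{\eta}_p,\overline{\widetilde{\eta}}_p)$, evaluated at the critical point:
\[
\delta^2 H = \frac{d}{dt}\Big|_{t=0}\delta H\bigl(\Psi_1+t\overline{\Psi}_{1p},\,\Psi_2+t\overline{\Psi}_{2p},\,\widetilde{\eta}+t\overline{\widetilde{\eta}}_p,\,\eta+t\overline{\eta}_p\bigr)\bigl(\Psi_{1p},\Psi_{2p},\widetilde{\eta}_p,\eta_p\bigr).
\]
I would reuse the four-piece decomposition $\delta H=\delta H_1+\delta H_2+\delta H_3+\delta H_4$ already established in the proof of Theorem~3.1, and differentiate each piece separately. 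Throughout the calculation the identity $-\Psi_i=\partial_2F_i(y,\Delta\Psi_i)$ from \eqref{3.4}, the Bernoulli-type conditions on $S$ and $\widetilde{S}$, and the normalizations $\Psi_1=-p_1$ on $B$, $\Psi_2=-p_2$ on $S$, $\Psi_1=\Psi_2=0$ on $\widetilde{S}$ from \eqref{psi} will be invoked to eliminate lower-order terms.

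For the interior piece $\delta H_1=-\sum_{i=1}^{2}\iint_{\Omega_i}[\Psi_i+\partial_2F_i(y,\Delta\Psi_i)]\Delta\Psi_{ip}\,dy\,dx$, differentiation in the $\overline{\Psi}_{ip}$-direction converts the bracket into $\overline{\Psi}_{ip}+\partial_{22}F_i(y,\omega_i)\overline{\omega}_{ip}$. An integration by parts on the resulting term $-\iint_{\Omega_i}\overline{\Psi}_{ip}\Delta\Psi_{ip}\,dy\,dx$ then produces the bulk form $\iint_{\Omega_i}\nabla\overline{\Psi}_{ip}\cdot\nabla\Psi_{ip}\,dy\,dx$ along with boundary flux integrals of the shape $\int_{\partial\Omega_i}\overline{\Psi}_{ip}(\partial\Psi_{ip}/\partial n_i)\,dl$, which supply the first line of \eqref{delta2H} and the interface flux $\int_{\widetilde{S}}\overline{\Psi}_{2p}(\partial\Psi_{2p}/\partial n_1)\,dl$. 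Because the integrand of $\delta H_1$ vanishes pointwise on $\Omega_i$ at the critical point, the domain-motion contributions through $\overline{\eta}_p$ and $\overline{\widetilde{\eta}}_p$ drop out, which makes this step self-contained.

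For the surface pieces $\delta H_2,\delta H_3,\delta H_4$, the main tool is the Leibniz rule on a moving curve,
\[
\frac{d}{dt}\Big|_{t=0}\int_{-\pi}^{\pi}f(x,\eta(x)+t\overline{\eta}_p(x))\,dx=\int_{-\pi}^{\pi}\partial_y f(x,\eta(x))\,\overline{\eta}_p(x)\,dx,
\]
together with its analogue on $\widetilde{\eta}$. Differentiating the Bernoulli-type bracket in $\delta H_2$ through $\overline{\eta}_p$ yields the $g\rho_2(p_1)\eta_p\overline{\eta}_p$, $[\tfrac12|\nabla\Psi|^2]_y\eta_p\overline{\eta}_p$, and $[\partial_1F_2+\partial_2F_2\,\omega_{2y}]\eta_p\overline{\eta}_p$ contributions, whereas differentiating its explicit $\overline{\Psi}_{2p}$ dependence supplies the $\partial_2F_2(y,\omega_2)\overline{\omega}_{2p}\eta_p$ term. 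Reading the trace on $S$ as $\Psi_2+\Psi_{2y}\overline{\eta}_p+\cdots$ and combining with the flux terms from the previous step produces the mixed surface terms $\Psi_{2y}\eta_p(\partial\overline{\Psi}_{2p}/\partial n_2)$ and $\Psi_{2y}\overline{\eta}_p(\partial\Psi_{2p}/\partial n_2)$; the identical procedure on $\widetilde{S}$ handles $\delta H_3$ and $\delta H_4$ and generates the analogous expressions involving $\overline{\widetilde{\eta}}_p$, $\Psi_{1y}$ and $\omega_{1y}$ that appear in \eqref{delta2H}.

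The principal obstacle is purely bookkeeping rather than any single estimate: $\widetilde{S}$ is a boundary of both $\Omega_1$ and $\Omega_2$ with opposite outer normals $n_1,n_2$, so the single variation $\overline{\widetilde{\eta}}_p$ produces domain-motion terms on both sides that must be assembled with matching signs, and the interface flux integrals produced by the integration by parts of step two must be paired against the Leibniz-rule output of step three to cancel the pieces that do not appear in \eqref{delta2H}. The consistent use of $dx$ versus $dl$ along $S$ and $\widetilde{S}$ demands similar care; however, at the critical point the vanishing of the Bernoulli brackets on each free boundary eliminates the Jacobian variations at leading order, so after these cancellations the remaining terms regroup into exactly the formula \eqref{delta2H}.
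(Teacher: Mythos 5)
Your proposal follows essentially the same route as the paper: the same four-piece decomposition of $\delta H$ carried over from the first-variation proof, differentiation of each piece in the second direction, cancellation of the domain-motion terms in $\delta^2H_1$ via $\Psi_i+\partial_2F_i(y,\omega_i)=0$ at the critical point, integration by parts to produce the gradient terms and boundary fluxes, and the moving-boundary Leibniz rule $\delta\int_S f\,dl=\int_S\delta f\,dl+\int_S f_y\,\delta\eta\,dl$ for the surface pieces. The paper's proof is exactly this computation written out term by term, so your outline matches it.
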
	
	\begin{proof}
Noting that $\Psi_i=-\partial_2F_i(y,\omega_1)$, we obtain $\delta^2 H=\delta^2H_1+\delta^2 H_2+\delta^2 H^3+\delta ^2 H_4$, where
	\[
	\begin{split}
	\delta^2 H_1=&-\iint_{\Omega_1}[\overline\Psi_{1p}+\partial_{22}F_1(y,\omega_1)\overline\omega_{1p}]\omega_{1p}dydx
	-\int_{\widetilde{S}}\overline{\eta}_p[\Psi_1+\partial_2F_1(y,\omega_1)]\omega_{1p}dx\\
	&+\int_{\widetilde{S}}[\Psi_2+\partial_2F_2(y,\omega_2)]\omega_{1p}\overline{\widetilde{\eta}}_pdx-\int_S[\Psi_2+\partial_2F_2(y,\omega_2)]\omega_{2p}\overline{\eta}_pdx\\
	&-\iint_{\Omega_2}[\overline\Psi_{2p}+\partial_{22}F_2(y,\omega_2)\overline\omega_{2p}]\omega_{2p}dydx\\
	=&-\iint_{\Omega_1}[\overline\Psi_{1p}+\partial_{22}F_1(y,\omega_1)\overline\omega_{1p}]\omega_{1p}dydx-\iint_{\Omega_2}[\overline\Psi_{2p}+\partial_{22}F_2(y,\omega_2)\overline\omega_{2p}]\omega_{2p}dydx\\
	=&\iint_{\Omega_1}[\nabla\overline{\Psi}_{1p}\cdot\nabla\Psi_{1p}-\partial_{22}F_1(y,\omega_1)\omega_{1p}\overline{\omega}_{1p}]dydx+\iint_{\Omega_2}[\nabla\overline{\Psi}_{2p}\cdot\nabla\Psi_{2p}-\partial_{22}F_2(y,\omega_2)\omega_{2p}\overline{\omega}_{2p}]dydx\\
	&-\int_{\widetilde{S}}\overline{\Psi}_{1p}\frac{\partial\Psi_{1p}}{\partial n_1}dl+\int_{\widetilde{S}}\overline\Psi_{2p}\frac{\partial\Psi_{2p}}{\partial n_1}dl-\int_S\overline\Psi_{2p}\frac{\partial\Psi_{2p}}{\partial n_2}dl+\int_B\overline\Psi_{1p}\frac{\partial\Psi_{1p}}{\partial y}dx.
	\end{split}
	\]
Noting that $\delta\int_Sfdl=\int_S\delta fdl+\int_Sf_y\delta\eta dl$ and
$\delta\int_{\widetilde{S}}fdl=\int_{\widetilde{S}}\delta fdl+\int_{\widetilde{S}}f_y\delta\widetilde\eta dl$ (see \cite{cons2}), $\delta^2 H_2$ is given by
	\[
	\begin{split}
	\delta^2H_2=&\int_S[\Psi_{2x}\overline\Psi_{2px}+\Psi_{2y}\Psi_{2py}-\partial_2 F_{2}(y,\omega_2)\overline{\omega}_{2p}]\eta_pdx\\
	&+\int_S[\Psi_{2x}\Psi_{2xy}
	+\Psi_{2y}\Psi_{2yy}+g\rho_2(p_1)-\partial_1 F_{2}(y,\omega_2)-\partial_2F_2(y,\omega_2)\omega_{2y}]\eta_p\overline{\eta}_pdx\\
	=&\int_S\Psi_{2y}\frac{\partial\overline\Psi_{2p}}{\partial n_2}\eta_pdl+\int_S\{g\rho_2(p_1)+[\frac{1}{2}|\nabla\Psi_2|^2]_y\}\eta_p\overline{\eta}_pdx\\
	&-\int_S\partial_2F_2(y,\omega_2)\overline\omega_{2p}\eta_pdx-\int_S[\partial_1F_2(y,\omega_2)+\partial_2F_2(y,\omega_2)\omega_{2y}]\eta_p\overline\eta_pdx.
	\end{split}
	\]
	Similarly, the remaining two terms is calculated as follows
	\[
	\begin{split}
		\delta^2H_3=&\int_S(\overline\Psi_{2p}+\Psi_{2y}\overline{\eta}_p)\frac{\partial\Psi_{2p}}{\partial n_2}dl-\int_{\widetilde{S}}(\overline\Psi_{2p}+\Psi_{2y}\overline{\widetilde{\eta}}_p)\frac{\partial{\Psi_{2p}}}{\partial n_2}dl\\
		&+\int_{\widetilde{S}}(\overline\Psi_{1p}+\Psi_{1y}\overline{\widetilde{\eta}}_p)\frac{\partial{\Psi_{1p}}}{\partial n_1}dl-\int_B\overline{\Psi}_{1p}\Psi_{1py}dx,
	\end{split}
	\]
	\[
 \begin{split}
 	\delta^2H_4&=\int_{\widetilde{S}}[\Psi_{1x}\overline{\Psi}_{1px}+\Psi_{1y}\overline\Psi_{1py}-\partial_2 F_{1}(y,\omega_1)\overline{\omega}_{1p}+\partial_2 F_{2}(y,\omega_2)\overline\omega_{2p}]\widetilde{\eta}_pdx\\
 	&+\int_{\widetilde{S}}[g(\rho_1(0)-\rho_2(0))-\partial_1F_1(y,\omega_1)-\partial_2F(y,\omega_1)\omega_{1y}+\partial_1F_2(y,\omega_2)+\partial_2F_2(y,\omega_2)\omega_{2y}]\widetilde{\eta}\overline{\widetilde{\eta}}_pdx\\
 	=&\int_{\widetilde{S}}\Psi_{1y}\frac{\partial\overline{\Psi}_{1p}}{\partial n_1}\widetilde{\eta}_pdl-\int_{\widetilde{S}}\Psi_{2y}\frac{\partial\Psi_{2p}}{\partial n_1}\widetilde{\eta}_pdl+\int_{\widetilde{S}}[-\partial_2 F_{1}(y,\omega_1)\overline{\omega}_{1p}+\partial_2 F_{2}(y,\omega_2)\overline\omega_{2p}]\widetilde{\eta}_pdx\\
 	&+\int_{\widetilde{S}}[g(\rho_1(0)-\rho_2(0))-\partial_1F_1(y,\omega_1)-\partial_2F(y,\omega_1)\omega_{1y}+\partial_1F_2(y,\omega_2)+\partial_2F_2(y,\omega_2)\omega_{2y}]\widetilde{\eta}\overline{\widetilde{\eta}}_pdx.
 \end{split}
	\]
	\end{proof}
	\begin{definition}
	The traveling wave $(\Psi_1, \Psi_2, \widetilde{\eta}, \eta)$ is linear stable if for any $(\Psi_{1p}, \Psi_{2p}, \widetilde{\eta}_p, \eta_p)\in\mathbb G$, the quadratic form $\delta^2 H$ is nonnegative.
	\end{definition}
	
	Now we give the quadratic form of $\delta^2 H$ by taking $(\Psi_{1p},\Psi_{2p},\eta_p,\widetilde{\eta}_p)=(\overline{\Psi}_{1p},\overline{\Psi}_{2p},\overline\eta_p,\overline{\widetilde{\eta}}_p)$ in \eqref{delta2H}.
	\begin{equation}\label{quadratic}
		\begin{split}
	\delta^2H=		&\iint_{\Omega_1}[\nabla{\Psi}_{1p}\cdot\nabla\Psi_{1p}-\partial_{22}F_1(y,\omega_1)|\omega_{1p}|^2]dydx+\iint_{\Omega_2}[\nabla{\Psi}_{2p}\cdot\nabla\Psi_{2p}-\partial_{22}F_2(y,\omega_2)|\omega_{2p}|^2]dydx\\
			&+\int_{\overline{S}}\Psi_{2p}\frac{\partial\Psi_{2p}}{\partial n_1}dl+\int_S\Psi_{2y}\frac{\Psi_{2p}}{\partial n_2}\eta_pdl+\int_S\Psi_{2y}{\eta}_p\frac{\partial \Psi_{2p}}{\partial n_2}dl\\
			&-\int_{\widetilde{S}}(\Psi_{2p}+\Psi_{2y}{\widetilde{\eta}}_p)\frac{\partial\Psi_{2p}}{\partial n_2}dl+\int_{\widetilde{S}}\Psi_{1y}{\widetilde{\eta}}_p\frac{\partial \Psi_{1p}}{\partial_{n_1}}dl+\int_{\widetilde{S}}\Psi_{1y}\frac{\partial\Psi_{1p}}{\partial{n_1}}\widetilde{\eta}_pdl-\int_{\widetilde{S}}\Psi_{2y}\frac{\partial\Psi_{2p}}{\partial n_1}\widetilde{\eta}_pdl
			\\&+\int_S\{g\rho_2(p_1)+[\frac{1}{2}|\nabla\Psi|^2]_y\}\eta_p^2dx-\int_S\partial_2F_2(y,\omega_2)\omega_{2p}\eta_pdx\\
			&-\int_S[\partial_1F_2(y,\omega_2)+\partial_2F_2(y,\omega_2)\omega_{2y}]\eta_p^2dx
			+\int_{\widetilde{S}}[-\partial_2 F_{1}(y,\omega_1){\omega}_{1p}+\partial_2 F_{2}(y,\omega_2)\omega_{2p}]\widetilde{\eta}_pdx\\
			&+\int_{\widetilde{S}}[g(\rho_1(0)-\rho_2(0))-\partial_1F_1(y,\omega_1)-\partial_2F(y,\omega_1)\omega_{1y}+\partial_1F_2(y,\omega_2)+\partial_2F_2(y,\omega_2)\omega_{2y}]\widetilde{\eta}{\widetilde{\eta}}_pdx.
		\end{split}
	\end{equation}

	Therefore, suitable conditions to ensure \eqref{delta2H} is nonnegative are necessary for us to obtain the linear stable results.

	\begin{theorem}
	If  the surface $\eta$ and the interface $\widetilde\eta$ are unperturbed and $\partial_{22}F_i(y,\omega_i)<0$ hold for $i=1,2,$,  then a classical travelling wave of \eqref{psi} is linearly stable.
		
	\end{theorem}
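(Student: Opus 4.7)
The strategy is simply to substitute the stability hypotheses directly into the quadratic form \eqref{quadratic}: the condition $\eta_p=\widetilde\eta_p=0$ kills most of the boundary integrals, the boundary-value structure of the stream function kills the rest, and the sign condition $\partial_{22}F_i(y,\omega_i)<0$ turns the surviving bulk integrals into sums of squares. First, every term in \eqref{quadratic} that carries a factor of $\eta_p$, $\eta_p^2$, $\widetilde\eta_p$, or $\widetilde\eta\widetilde\eta_p$ vanishes immediately. This already removes all of lines $2$--$3$ and $5$--$7$ of the boundary block, both surface integrals of the form $\int_S\{\cdots\}\eta_p^2$ and $\int_S\partial_2F_2\,\omega_{2p}\eta_p$, the interface integral $\int_{\widetilde S}[-\partial_2F_1\omega_{1p}+\partial_2F_2\omega_{2p}]\widetilde\eta_p\,dx$, and the last Bernoulli-type interface integral weighted by $\widetilde\eta\widetilde\eta_p$.

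Next, since $\eta$ and $\widetilde\eta$ are unperturbed, the domains $\Omega_1$ and $\Omega_2$ are frozen and the perturbations $\Psi_{ip}$ must preserve the Dirichlet data coming from \eqref{psi}: $\Psi_{1}=-p_1$ on $B$, $\Psi_1=\Psi_2=0$ on $\widetilde S$, and $\Psi_2=-p_2$ on $S$, with $p_1,p_2$ treated as the fixed physical constants appearing in $H$. Therefore $\Psi_{1p}=0$ on $B\cup\widetilde S$ and $\Psi_{2p}=0$ on $\widetilde S\cup S$, so the only two boundary integrals that could still survive, namely $\int_{\widetilde S}\Psi_{2p}\,\partial_{n_1}\Psi_{2p}\,dl$ and $-\int_{\widetilde S}\Psi_{2p}\,\partial_{n_2}\Psi_{2p}\,dl$, vanish as well.

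After these cancellations \eqref{quadratic} collapses to
\[
\delta^2H=\iint_{\Omega_1}\!\bigl[\,|\nabla\Psi_{1p}|^2-\partial_{22}F_1(y,\omega_1)\,|\omega_{1p}|^2\,\bigr]\,dy\,dx+\iint_{\Omega_2}\!\bigl[\,|\nabla\Psi_{2p}|^2-\partial_{22}F_2(y,\omega_2)\,|\omega_{2p}|^2\,\bigr]\,dy\,dx.
\]
The gradient terms are manifestly nonnegative, and the assumption $\partial_{22}F_i(y,\omega_i)<0$ makes $-\partial_{22}F_i(y,\omega_i)\,|\omega_{ip}|^2\ge 0$ as well. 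Hence $\delta^2H\ge 0$ for every admissible pair of perturbations with $\eta_p=\widetilde\eta_p=0$, which is exactly the linear-stability statement demanded by the preceding definition.

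The main obstacle is not computational but conceptual: one must justify that freezing the free surface and internal interface forces $\Psi_{ip}$ to vanish on the four fixed boundary components rather than merely be constant on each of them. If instead $p_1,p_2$ were allowed to shift under the perturbation, $\Psi_{1p}$ would be an unknown constant on $B$ and $\Psi_{2p}$ an unknown constant on $S$, and one would have to use the defining constraints $\int_B\Psi_{1py}\,dx=0$ and $\int_S\partial_{n_2}\Psi_{2p}\,dl=0$ of $\mathbb P$, together with the identification $n_1=-n_2$ on $\widetilde S$, to recover the same cancellation. Verifying this slightly more general setting is the one point requiring care.
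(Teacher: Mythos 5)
The paper gives no proof of this theorem --- it states only that ``the proof \ldots is simple, so we omit it here'' --- so there is nothing to compare line by line; your route (set $\eta_p=\widetilde\eta_p=0$ in the quadratic form \eqref{quadratic}, let the boundary block collapse, and use $\partial_{22}F_i<0$ to make the two bulk integrals sums of nonnegative terms) is clearly the intended argument, and your bookkeeping of which terms carry a factor of $\eta_p$ or $\widetilde\eta_p$ is correct.

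The one substantive issue is exactly the point you flag at the end, and I would not downplay it as merely ``requiring care'': after killing every term weighted by $\eta_p$ or $\widetilde\eta_p$, the quadratic form still contains $\int_{\widetilde S}\Psi_{2p}\,\partial_{n_1}\Psi_{2p}\,dl-\int_{\widetilde S}\Psi_{2p}\,\partial_{n_2}\Psi_{2p}\,dl$ (the first is the term the paper writes with the stray subscript $\overline S$, which from the derivation of $\delta^2H_1$ is $\widetilde S$). Since $n_1$ and $n_2$ are opposite normals on $\widetilde S$, these two integrals \emph{add} rather than cancel, giving $2\int_{\widetilde S}\Psi_{2p}\,\partial_{n_1}\Psi_{2p}\,dl$, which has no sign for a general perturbation. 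You dispose of it by asserting $\Psi_{2p}=0$ on $\widetilde S$, but that is not a consequence of anything stated in the paper: the admissible class $\mathbb P$ constrains only $\int_B\Psi_{1py}\,dx$ and $\int_S\partial_{n_2}\Psi_{2p}\,dl$, Definition 4.3 quantifies over \emph{all} $(\Psi_{1p},\Psi_{2p},\widetilde\eta_p,\eta_p)$, and the proof of Theorem 3.1 explicitly uses perturbations with nonzero, essentially arbitrary data on $\widetilde S$. Your justification --- that perturbations which freeze the interface and preserve the normalization $\Psi_1=\Psi_2=0$ on $\widetilde\eta$ must have $\Psi_{ip}=0$ there --- is physically reasonable, but it amounts to an additional hypothesis on the perturbation class that must be stated, since the stream function of a perturbed flow is only determined up to a constant and nothing forces the perturbed flow to carry the same normalization. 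So: either add ``$\Psi_{2p}=0$ on $\widetilde S$'' (or an equivalent normalization of the perturbations) as an explicit hypothesis, in which case your proof is complete, or the interface term survives and the claimed nonnegativity fails. As written, the theorem is loose on precisely this point, and your proof inherits that looseness rather than resolving it.
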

	We  provide the above theorem, which ensure that the traveling wave is linear stable. Since the proof of the theorem is simple, we omit it here. For the study of more sufficiency conditions, we will leave it as the future topic.

\section*{Acknowledgement}
\hskip\parindent
\small
We declare that the authors are ranked in alphabetic order of their names and all of them have the same contributions to this paper.

\end{document}